\newtheorem{thm}{Theorem}[section]
\newtheorem{lemma}{Lemma}[section]
\newtheorem{claim}{Claim}[section]
\newtheorem{cor}{Corollary}[section]
\newtheorem{define}{Definition}[section]
\renewenvironment{proof}[1][]{\begin{trivlist}
\item[\hspace{\labelsep}{\bf\noindent Proof#1:\/}] }{\qed\end{trivlist}}
\newcommand{\eps}[0]{\varepsilon}
\newcommand{\Ex}[0]{ \mathbb{E} }
\newcommand{\set}[2]{ \{ #1 , \ldots #2 \} }
\newcommand{\ud}{\,\mathrm{d}}
\newcommand{\enote}[1]{}
\newcommand{\remove}[1]{}
\title{The property of having a $k$-regular subgraph has a sharp threshold }
\date{28 March, 2012}
\author {
Shoham Letzter
\thanks{Institute of Mathematics, Hebrew University, Jerusalem, Israel. email: shoham.letzter at mail.huji.ac.il.}
}
\begin{document}

{
\maketitle
}

\begin{abstract}
We prove that the property of containing a $k$-regular subgraph in the random graph model $G(n,p)$ has a sharp threshold for $k\ge3$. We also show how to use similar methods to obtain an easy prove for the (known fact of) sharpness of having a non empty $k$-core for $k\ge3$.
\end{abstract}

\section{Introduction}
Let $G(n,p)$ denote the random graph with vertex set $\set{1}{n}$, in which every edge appears  independently with probability $p=p(n)$.
For a fixed $n$, a \emph{graph property} is a subset of the set of all graphs on $\set{1}{n}$.
A graph property $\cal{A}$ \enote{Change all $A$ to $\cal{A}$} is called \emph{monotone} if \enote{ it is preserved under
adding edges} it is preserved under addition of edges.
A graph property is called \emph{symmetric} if it is closed under 
permutations of the vertices, i.e. does not depend on the vertex labelling.
We shall consider sequences of graph properties $\mathcal{A}=\mathcal{A}(n)$, where, typically,
the properties have something in common (e.g. the property, or more precisely the sequence of properties,
of connected graphs).

Let
\begin{equation*}
\mu(p)=\mu(\mathcal{A},n,p)=\Pr[G(n,p)\in \mathcal{A}].
\end{equation*}
Note that for monotone $\mathcal{A}$, $\mu(p)$ is monotone on $[0,1]$.
A function $p^*=p^*(n)$ is called a \emph{threshold function} for $\cal{A}$ if for every $p=p(n)$
\begin{equation*}
\lim_{n \to \infty}\mu(p)=\left\{
\begin{array}{rl}
1 & p\gg p^* \\
0 & p\ll p^*
\end{array} \right .
\end{equation*}
As was observed in \cite{threshold}, every monotone property has a threshold function.

A property $\cal{A}$ is said to have a \emph{sharp threshold} if there exists some $p^*=p^*(n)$
such that for every $\eps>0$ and every $p=p(n)$
\begin{equation*}
\lim_{n \to \infty}\mu(p)=\left\{
\begin{array}{rl}
1 & p> (1+\eps)p^* \\
0 & p< (1-\eps)p^*
\end{array} \right .
\end{equation*}
If a property does not have a sharp threshold we say that it has a \emph{coarse threshold}.
Note that if a property has a coarse threshold,  there exist $p=p(n)$, $C,\alpha>0$ such that $\alpha<\mu(p)<1-\alpha$ and
for infinitely many values of $n\in\mathbb{N}$, $p\frac{\ud\mu}{\ud p}|_{p=p}\le C$.

Let us look at two examples for monotone and symmetric graph properties. 
First, consider the property of being connected. Relatively straightforward analysis shows that this property has a sharp threshold with a threshold function $p(n)=\frac{\mathrm{log}n}{n}$. 
Consider also the example of containing a triangle. It can be shown that its threshold function is $\frac{1}{n}$ and that for every $c>0$, $G(n,\frac{c}{n})$ contains a triangle with probability tending to $1-e^{-c^3/6}$ when $n$ goes to infinity. Hence the threshold in this case is coarse.
The studying of the above examples and many others, 
may lead to the belief that properties with a coarse threshold are either the properties of 
containing a subgraph from a certain fixed list of graphs -  
properties which are called \emph{local} properties, or can be approximated by a local property. 
Roughly speaking, this is indeed the case, as was shown by Friedgut \cite{sharp-threshold} 
the approximability by a local property is a necessary condition 
for a monotone and symmetric graph property to have a coarse threshold. 
This theorem and variations on it, such as Bourgain's theorem \cite{Bourgain}, are to date the main instruments
in proving sharpness of thresholds.

In \cite{hunting}, this approach is streamlined and adapted to applications, however the version there
turned out to be slightly too weak for this article's purpose. 
Instead, the technique which was eventually used here is quite similar to the one appearing in \cite{k-color}.

Before stating the theorem which will be used here, let us introduce some notation.
Let $H$ be a fixed graph. Denote by $\Pr[\mathcal{A},n,p \vert H]=\Pr[\mathcal{A} \vert H]$  
the probability that $G(n,p)$ has property $\cal{A}$, conditional on the appearance of a
 \emph{specific} copy of $H$ in $G(n,p)$, i.e. some specific labelled graph which is isomorphic to $H$
 and whose vertices form a subset of $\set{1}{n}$. 
 Note that for a symmetric property $\cal{A}$, 
 this probability doesn't depend on the choice of the copy of $H$. 
 Also, although it might not be immediately obvious, it is the case 
 that conditioning on the appearance of a specific copy of $H$ 
 is different than conditioning on the appearance of some copy of $H$.

The following theorem gives a necessary condition for a property to have a coarse threshold. 
This theorem states, roughly, the idea that was presented earlier,
 that if some monotone and symmetric property has a coarse threshold, 
 it can be approximated by a local property. 
 It is a slight variation of a theorem appearing in \cite{sharp-threshold}.
\begin{thm}
Let $0<\alpha<1/2$, $C>0$.
For every $0<\eps<1$, there exist $B\in\mathbb{N}$ and $0<\beta<1$, such that for every $n\in\mathbb{N}$, every monotone and symmetric property $\cal{A}$ and every $0<p^*<1$ such that $\alpha<\mu(p^*)<1-\alpha$ and $p^*\cdot\frac{\ud \mu}{\ud p}\vert_{p=p^*} \leq C$, there exists a graph $H$ with the following properties:
\begin{itemize}
\item
$H$ has no more than $B$ edges.
\item
The probability that some copy of $H$ appears in $G(n,p)$ is at least $\beta$.
\item
$\Pr[\mathcal{A} \vert H] > 1 - \eps$.
\end{itemize}
\end{thm}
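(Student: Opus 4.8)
The plan is to read $f=\mathbf 1_{\mathcal A}$ as a monotone Boolean function on the $N=\binom n2$ edge-variables, equipped with the $p^*$-biased product measure, and to squeeze the required local witness $H$ out of the smallness of the total influence of $f$. The first step is the Margulis--Russo identity, which expresses the derivative as a sum of edge-influences,
\begin{equation*}
\frac{\ud\mu}{\ud p}\Big\vert_{p=p^*}=\sum_{e}I_e,
\end{equation*}
where $I_e$ is the influence of the edge $e$ at $p=p^*$. The hypothesis $p^*\cdot\frac{\ud\mu}{\ud p}\vert_{p^*}\le C$ then says that the total influence is at most $C/p^*$, while $\alpha<\mu(p^*)<1-\alpha$ keeps the variance of $f$ bounded away from $0$. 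Because $\mathcal A$ is symmetric, any edge can be carried to any other by a permutation of the vertices, so all the $I_e$ are equal; hence each individual influence is tiny, of order $C/(Np^*)$, which is exactly the ``all influences small'' regime in which the Fourier-analytic machinery bites.

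Next I would feed these two facts---bounded total influence and variance bounded away from $0$---into the heart of the argument, Bourgain's sharp-threshold theorem, proved via hypercontractivity. Its content is that a monotone function whose total influence is bounded cannot keep its Fourier weight on high levels while maintaining a large variance; consequently there must exist a \emph{small} set $S$ of coordinates such that forcing all edges of $S$ to be present raises the conditional probability of $\mathcal A$ by a constant amount $\delta=\delta(\alpha,C)>0$. This $S$ is supported on boundedly many vertices and furnishes a candidate graph. The quantitative dependence of $|S|$ and of $\delta$ on $\alpha$ and $C$ is precisely what the effective form of Bourgain's theorem provides.

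To upgrade a fixed increment $\delta$ into the required bound $\Pr[\mathcal A\mid H]>1-\eps$, the natural route is to iterate: condition on the first witness, observe that the residual function is again monotone with mean pushed up by $\delta$ and variance still bounded below so long as the mean stays under $1-\eps$, reapply Bourgain's theorem to obtain a further witness, and after a number of rounds bounded in terms of $\eps,\alpha,C$ cross the level $1-\eps$. Taking $H$ to be the union of the witnesses gives a graph with at most some $B=B(\eps,\alpha,C)$ edges, yielding the third bullet. For the second bullet, since $H$ has at most $B$ edges a fixed copy appears with probability $(p^*)^{e(H)}$ and there are $\Theta(n^{v(H)})$ copies, so a second-moment computation bounds $\Pr[\text{some copy appears}]$ below by a constant $\beta$, provided $H$ is not too rare; and $H$ cannot be too rare, since the local witnesses arise from the influence of $f$, which symmetry spreads uniformly over all edges, so a witness too scarce to be seen could not carry the influence that produced it.

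The main obstacle is the Fourier-analytic core in the second step: marshalling Bourgain's theorem with the correct quantitative dependence of the witness size and the increment on $\alpha$ and $C$, and checking that the hypotheses it consumes (bounded influence sum, variance bounded away from $0$, small individual influences) are genuinely delivered by the first step. A secondary but real difficulty is the bookkeeping in the iteration: one must verify that conditioning on the previously found witnesses preserves monotonicity and keeps the total influence controlled (pivotality need not behave monotonically under conditioning, so this requires care), and that the accumulated witness stays supported on a bounded vertex set, so that the final $H$ has boundedly many edges and is common enough to meet the lower bound $\beta$.
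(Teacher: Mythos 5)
A preliminary remark on the comparison itself: the paper does not prove this theorem. It states it as ``a slight variation of a theorem appearing in \cite{sharp-threshold}'' --- that is, Friedgut's sharp-threshold theorem together with Bourgain's appendix \cite{Bourgain} --- and uses only its consequence, Corollary \ref{coarse}. So there is no internal proof to measure your sketch against; the route you outline (Margulis--Russo to turn the hypothesis $p^*\frac{\ud\mu}{\ud p}\vert_{p^*}\le C$ into a total-influence bound, Bourgain's hypercontractive theorem to extract a bounded ``booster'' set, and an iteration to climb above $1-\eps$) is a faithful reconstruction of how the cited result is proved in the literature, and treating Bourgain's theorem as a black box is no worse than what the paper itself does.

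As a standalone proof, however, two steps are genuinely defective. First, your justification of the second bullet is wrong: it is false that a graph with boundedly many edges appears in $G(n,p)$ with probability bounded below, and no second-moment computation rescues this --- for $p=\Theta(1/n)$, the regime relevant to this paper, a fixed $K_4$ has six edges yet appears with probability $o(1)$, since the expected number of copies is $\Theta(n^4p^6)=o(1)$. The lower bound $\beta$ is part of the \emph{content} of the cited theorem, not a generic counting fact: in Bourgain's dichotomy the booster sets are located inside the support of a typical sample (either a bounded certificate forcing $\mathcal{A}$ is contained in the random graph with probability at least $\delta$, or boosters that raise the conditional probability by $\delta$ are present with probability at least $\delta$), and the appearance bound must be extracted from there, with a pigeonhole over the finitely many isomorphism types. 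Your closing intuition (``a witness too scarce to be seen could not carry the influence that produced it'') is the right one, but it replaces rather than supplies the argument. Second, the iteration has the obstacle you flag but do not resolve: after conditioning on a specific copy of $H_1$, symmetry is lost (harmless, since Bourgain needs none --- though this invalidates your first-step framing via equal influences in later rounds), and, more seriously, the hypothesis $p\,\frac{\ud\mu}{\ud p}\le C$ need not persist for the conditioned function, since pivotality can increase under restriction; one must also check that the accumulated witness, being a union of boosters found in different rounds, still appears with probability bounded below. Handling exactly this bookkeeping is the nontrivial part of the deduction in \cite{sharp-threshold} (and is why the streamlined version in \cite{hunting} gives only a $\delta$-boost rather than the $1-\eps$ form stated here), so as written your sketch has a real gap precisely where the stated theorem goes beyond a single application of Bourgain.
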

Clearly, the graph $H$ in the above theorem can be assumed to have no isolated vertices, and hence there are
only a finite number of possible isomorphism types of such graphs with at most $B$ edges. It follows that whenever 
a monotone symmetric graph property has a coarse threshold there is a single ``culprit" $H$ that can be blamed for
the situation for infinitely many values of $n$. Formally 
\begin{cor}
\label{coarse}
Let $\cal{A}$ be a monotone and symmetric graph property. If $\cal{A}$ has a coarse threshold, then there exist $\alpha>0$ and $p=p(n)$ such that $\alpha<\mu(p)<1-\alpha$, and for every $\eps>0$ there exist $\beta>0$ and a fixed graph $H$ such that for infinitely many $n\in\mathbb{N}$:
\begin{itemize}
\item
The probability that some copy of $H$ appears in $G(n,p)$ is at least $\beta$.
\item
$\Pr[\mathcal{A} \vert H] > 1 - \eps$.
\end{itemize}
\end{cor}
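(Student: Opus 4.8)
The plan is to deduce the corollary from the preceding theorem by first fixing the parameters handed to us by the coarse threshold, and then collapsing the sequence of graphs that the theorem produces into a single graph via a pigeonhole argument. First I would invoke the observation recorded just before the theorem: since $\cal{A}$ has a coarse threshold, there exist constants $\alpha, C > 0$ and a function $p = p(n)$, together with an infinite set $N_0 \subseteq \mathbb{N}$, such that for every $n \in N_0$ we have $\alpha < \mu(p) < 1-\alpha$ and $p\cdot\frac{\ud\mu}{\ud p}\big|_{p} \le C$. These $\alpha$ and $p$ are exactly the ones claimed in the statement; the point is that they are chosen once and for all, \emph{before} $\eps$ enters the picture.

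Next, fix an arbitrary $0<\eps<1$. Applying the preceding theorem with this $\eps$ and with the constants $\alpha, C$ just obtained yields $B \in \mathbb{N}$ and $0<\beta<1$. For each $n \in N_0$ the hypotheses of the theorem hold (precisely because $\alpha<\mu(p)<1-\alpha$ and $p\cdot\frac{\ud\mu}{\ud p}\le C$ there), so the theorem supplies a graph $H_n$ with at most $B$ edges, whose specific copy appears in $G(n,p)$ with probability at least $\beta$, and with $\Pr[\mathcal{A} \mid H_n] > 1-\eps$. As remarked after the theorem, I may delete the isolated vertices of each $H_n$: this alters neither the number of edges nor the conditional probability $\Pr[\mathcal{A}\mid H_n]$ (conditioning on a specific copy of $H_n$ only fixes its edges, and isolated vertices carry none), while it can only increase the probability that some copy appears. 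Hence I may assume that every $H_n$ has no isolated vertices.

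With no isolated vertices and at most $B$ edges, each $H_n$ has at most $2B$ vertices, so the graphs $H_n$ range over a \emph{finite} set of isomorphism types. By the pigeonhole principle there is one isomorphism type $H$ and an infinite subset $N_1 \subseteq N_0$ with $H_n \cong H$ for all $n \in N_1$. Because $\cal{A}$ is symmetric, both the probability that some copy of $H$ appears and the quantity $\Pr[\mathcal{A}\mid H]$ depend only on the isomorphism type of $H$; so this single graph $H$, together with $\beta$, witnesses both asserted properties for every $n \in N_1$, which is the conclusion sought.

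The only genuine subtlety is the passage from the sequence $(H_n)$, which the theorem delivers separately for each $n$, to a single fixed graph valid for infinitely many $n$. This is exactly what the uniform edge bound $B$ buys: after normalising away isolated vertices it forces the $H_n$ into finitely many isomorphism types, and pigeonhole does the rest. Beyond this, the proof is bookkeeping: one must keep the quantifiers in order ($\alpha$ and $p$ are extracted from the coarse threshold and held fixed, whereas $B$, $\beta$ and $H$ are permitted to depend on $\eps$), and one must verify that deleting isolated vertices preserves all three conclusions of the theorem, which it does since such vertices contribute no edges and the property is symmetric.
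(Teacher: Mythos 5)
Your proposal is correct and follows essentially the same route as the paper, whose ``proof'' is precisely the remark preceding the corollary: extract $\alpha$, $C$ and $p$ from the definition of a coarse threshold, apply the theorem for each relevant $n$ to get a graph $H_n$ with at most $B$ edges, strip isolated vertices, and pigeonhole the finitely many isomorphism types to obtain one fixed culprit $H$ valid for infinitely many $n$. Your write-up is in fact more careful than the paper's about the quantifier order and about why deleting isolated vertices preserves all three conclusions.
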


The main goal of this article is to show that the property of containing a $k$-regular subgraph, for a fixed $k\ge3$, has a sharp threshold, using the above theorem.
\begin{thm}
\label{regular}
Let $k\ge3$, and let $\cal{A}$ be the property of containing a $k$-regular subgraph. Then $\cal{A}$ has a sharp threshold.
\end{thm}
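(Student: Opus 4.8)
The plan is to argue by contradiction: assume $\mathcal A$ (containing a $k$-regular subgraph) has a coarse threshold and derive a contradiction from Corollary~\ref{coarse}. So suppose $\mathcal A$ is coarse; fix the guaranteed $\alpha>0$ and $p=p(n)$ with $\alpha<\mu(p)<1-\alpha$ along the relevant subsequence of $n$, together with the derivative bound $p\,\frac{\ud\mu}{\ud p}\le C$ coming from coarseness, and apply the corollary with a small $\eps>0$ to be chosen last. This produces a fixed graph $H$ with at most $B$ edges that appears in $G(n,p)$ with probability at least $\beta$ and satisfies $\Pr[\mathcal A\mid H]>1-\eps$. The first thing I would do is note that, since distinct edges of $G(n,p)$ are independent, conditioning on a \emph{specific} copy $H_0$ of $H$ is the same as adding the (at most $B$) edges of $H_0$ deterministically to an otherwise untouched $G(n,p)$; hence $\Pr[\mathcal A\mid H]=\Pr[G(n,p)\cup H_0\in\mathcal A]$. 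Since $\{G\in\mathcal A\}\subseteq\{G\cup H_0\in\mathcal A\}$ by monotonicity, the bounded edge set $E(H_0)$ turns a $G$ with no $k$-regular subgraph into one with a $k$-regular subgraph — an event I call ``$H_0$ rescues $G$'' — with probability exactly $\Pr[\mathcal A\mid H]-\mu(p)>(1-\eps)-(1-\alpha)=\alpha-\eps$. By symmetry of $\mathcal A$ this rescue probability is at least $\alpha-\eps$ for every one of the $\Theta(n^{|V(H)|})$ possible planting locations.

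The combinatorial core is to read off what a rescue says about $G$ alone. If $H_0$ rescues $G$, take a $k$-regular subgraph $R$ of $G\cup H_0$ and delete from it the edges it borrows from $H_0$; at most $B$ edges are deleted, so $R\cap G$ is a subgraph of $G$ that is $k$-regular at every vertex except the endpoints of those edges — a set of at most $2B$ vertices whose total degree-deficiency is at most $2B$. In other words, $G$ contains a subgraph that is $k$-regular apart from a bounded local defect sitting precisely at the (otherwise arbitrary) planting location, and this defect is repairable by adding at most $B$ suitable edges. This is where $k\ge3$ enters: for $k\le 2$ the property $\mathcal A$ is genuinely local — containing an edge, respectively a cycle — and really does have a coarse threshold, so any argument must collapse there, whereas for $k\ge3$ these near-regular subgraphs are abundant and robustly completable, which is what lets the rest of the argument run.

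The contradiction I would then extract is that such abundant, spread-out, repairable near-regular subgraphs force the threshold to be sharp, i.e. they make $p\,\frac{\ud\mu}{\ud p}$ large. Concretely, I would pass from $p$ to $(1+\delta)p$ and write $G(n,(1+\delta)p)\supseteq G(n,p)\cup G(n,p')$ with a sparse independent sprinkle $G(n,p')$, $p'\approx\delta p$. Using that rescues occur at a positive density of the $\Theta(n^{|V(H)|})$ locations, I would argue that with good probability the sprinkled edges repair the bounded defect of at least one near-$k$-regular subgraph of $G(n,p)$, so that $\mu((1+\delta)p)$ exceeds $\mu(p)$ by an amount bounded below in terms of $\alpha$. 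On the other hand, integrating the hypothesis $p\,\frac{\ud\mu}{\ud p}\le C$ gives $\mu((1+\delta)p)-\mu(p)=O(\delta)$; choosing first $\eps$ and then $\delta$ small enough produces a jump of order $\alpha$ that this budget forbids. Equivalently, via Russo's formula the same conclusion can be phrased as: the repairable defects supply $\omega(n)$ pivotal edges in expectation at the coarse point, whereas $p\,\frac{\ud\mu}{\ud p}\le C$ caps their expected number at $O(n)$.

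The step I expect to be the main obstacle is exactly the one the introduction flags: the difference between conditioning on a \emph{specific} copy of $H$ and on \emph{some} copy. The rescue statement I obtain for free is per-location (a specific copy), but the sprinkling (or Russo) contradiction needs the repairs to be available simultaneously in \emph{many}, well-separated places, so that a sparse random sprinkle of density only $\delta$ completes at least one while keeping $\mu((1+\delta)p)-\mu(p)$ within the $O(\delta)$ budget. Amplifying one guaranteed near-regular subgraph into a genuinely abundant, spread-out family, and controlling the correlations between the rescue events at different locations tightly enough to carry out this completion, is where the real work lies, and is where the structural features of $k$-regular subgraphs for $k\ge3$ must be used quantitatively.
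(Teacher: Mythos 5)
Your opening moves are sound and match the paper's: apply Corollary \ref{coarse}, observe that conditioning on a specific copy $H_0$ is the same as planting its edges, and extract from a ``rescue'' a subgraph of $G$ that is $k$-regular apart from a bounded defect at the planted location. But your proof has a genuine gap exactly where you flag it, and the route you propose for closing it (a density-$\delta p$ sprinkle over the $\Theta(n^{|V(H)|})$ planting locations, with correlation control between rescue events, or a Russo-formula count of pivotal edges) is both unsolved in your sketch and substantially harder than what the problem requires. The paper's key device, absent from your proposal, is to condition on the event $B_i$ that the random edges between the planted set $\set{1}{r}$ and $\set{r+1}{n}$ form a matching of constant size exactly $i$ (Lemma \ref{Bi} shows some such $i\ne 0$ preserves the conditional rescue probability), and then to pin down the exact number $m$ of crossing edges used by the rescuing $k$-regular subgraph (Lemma \ref{Ae}). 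Under this conditioning, the matching endpoints in $\set{r+1}{n}$ are a \emph{uniformly random} vertex set, independent of the graph on $\set{r+1}{n}$; so the per-location rescue converts directly into: conditioned on $G(n-r,p)$ having no $k$-regular subgraph, a uniformly random $m$-set is $k$-critical (each of its vertices short exactly one edge) with probability at least $\delta$. This single observation replaces your ``abundant, spread-out family'' entirely: the endpoints of $m/2$ uniformly random new edges are themselves essentially a uniformly random $m$-set, so adding a \emph{constant} number of random edges creates a $k$-regular subgraph with probability at least $\delta/2$, contradicting Lemma \ref{addedges} (adding $M=o(\sqrt{p\binom{n}{2}})$ random edges changes $\mu$ by $o(1)$). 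No $\Theta(\delta n)$-edge sprinkle, no multi-location correlation analysis, and no pivotality computation are needed.

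Two further substantive points are missing from your sketch. First, your defect description --- at most $2B$ vertices with total deficiency at most $2B$ --- is too crude for the repair step: a random matching of added edges can only repair a defect in which each deficient vertex is short \emph{exactly one} edge. The paper obtains exactly this structure because $B_i$ forbids edges inside $\set{1}{r}$ and allows each outside vertex at most one neighbour in $\set{1}{r}$; and it must first rule out that $H$ itself contains a $k$-regular subgraph (via the threshold $n^{-2/k}=\omega(1/n)$ for appearance of a fixed graph), since otherwise the rescuing subgraph could sit entirely inside $H_0$ and leave no defect in $G$ at all --- a case your deletion argument silently permits. Second, parity: when $m$ is odd the $m$ deficient vertices cannot be paired by $m/2$ added edges. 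The paper shows that $m$ odd forces $k$ odd, and then runs the entire conditioning argument a second time (Claim \ref{last} followed by another application of Lemma \ref{Bi}) to produce an even defect size $m'=m(k-1)$ before concluding. Your proposal never notices this obstruction, and any sprinkling variant would hit it as well.
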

For $k=2$ it is known that the property of having a $2$-regular subgraph has a coarse threshold.
In fact, it has a semi sharp threshold. The probability that $G(n,\frac{c}{n})$ contains a cycle is bounded away from both $0$ and $1$ for every $0<c<1$, and tends to $1$ when $n$ goes to infinity, for every $c>1$.
For $k\ge3$ it is known that the threshold function $p=p(n)$ satisfies $p=\Theta(\frac{1}{n})$.
Moreover, in \cite{k-regular} it is shown that for every $k\ge 3$ there exist constants $d_k<D_k$ which depend linearly on $k$, such that for every $p=p(n)$
\begin{equation*}
\lim_{n \to \infty}\mu(p)=\left\{
\begin{array}{rl}
1 & p> \frac{D_k}{n} \\
0 & p< \frac{d_k}{n}
\end{array} \right.
\end{equation*}
There it is also conjectured that this property has a sharp threshold.

We shall also show that the method used to derive the sharpness of the threshold of 
containing a $k$-regular subgraph, can be used easily to derive sharpness of the property of containing 
a non empty $k$-core, for $k\ge 3$.
The \emph{$k$-core} of a graph, for some $k\ge2$, is its maximal subgraph in which all vertices have degree at least $k$.
Note that for $k=2$, containing a $2$-regular subgraph is equivalent to having a non empty $2$-core. Hence,  the roperty of having a non empty $2$-core has a coarse threshold.

The sharpness of the threshold for the emergence of a non trivial $k$-core has already been proved in \cite{k-core}, 
where the threshold function is computed precisely
 for every $k\ge 3$. 
 It is shown that the threshold function is $\frac{\lambda_k}{n}$ for a constant $\lambda_k>0$ which is computed in the paper.
However, the calculations describing the precise threshold behaviour are rather complex.
Therefore it is interesting to use our approach to derive the sharpness of the threshold for containing a 
non empty $k$-core, 
(though not computing the threshold function at all).
Furthermore, knowing that the threshold is sharp may, in principle, 
make finding the actual threshold function easier. whenever the probability of containing a non empty $k$-core
is bounded away from zero (respectively from one) for a series of edge probabilities $p(n)$, it must tend to one 
(respectively to zero).

\subsection{The Structure of the Paper}
In section \ref{pre} we give an outline of the proof of theorem \ref{regular} as well as some necessary lemmas. The proof of theorem \ref{regular} is given in section \ref{proofs}. Section \ref{core} contains a sketch of the proof of sharpness of having a non empty $k$-core.


\section{Preliminaries}
\label{pre}
Let us start by giving an outline of the proof.
Let $k\ge3$ be fixed.
We assume, by way of contradiction, that the property of containing a $k$-regular subgraph has a coarse threshold.
By corollary \ref{coarse}, there exist $\alpha$, $p$ and a fixed graph $H$ such 
that for infinitely many $n\in\mathbb{N}$, $\alpha<\mu(p)<1-\alpha$, and the addition of $H$ to $G(n,p)$ 
significantly increases the probability of containing a $k$-regular subgraph.
This will lead to the conclusion that $G(n,p)$ often has many ``almost $k$-regular" subgraphs, namely that if a small number of vertices is chosen randomly, then with probability bounded away from $0$, there will exist a subgraph in which all vertices have degree $k$, apart from the chosen vertices, that have degree $k-1$.
This, in turn, will lead to the conclusion that the addition of a small number of randomly chosen 
edges increases significantly the probability of containing a $k$-regular subgraph.
This will contradict the state of affairs
for properties in $G(n,p)$ that have a coarse threshold, 
where adding a set of randomly chosen edges has a negligible effect whenever the number of added
edges is small compared to the standard deviation of the number of edges in the random graph .

The lemmas in the following subsection will be used for the proof of theorem \ref{regular}.

\subsection{Main Lemmas }
We begin by considering a random graph model that is a small perturbation of $G(n,p)$.
This follows the approach used in \cite{k-color}.
First, a random graph on the vertex set $\set{r+1}{n}$ is chosen as usual, 
by letting each edge appear in the graph independently with probability $p$. 
Next, a matching consisting of $i$ more edges from $\set{1}{r}$ to $\set{r+1}{n}$ is added.
Here $r,i\in\mathbb{N}$ are constants.

\begin{lemma}
\label{Bi}
Let $\alpha,\beta,\eps>0$, $r\in\mathbb{N}$, $S \subseteq\mathbb{N}$ an infinite set, $\mathcal{A}=\mathcal{A}(n)$, $X=X(n)$ graph properties, 
and $p=p(n)$, such that  $p=\Theta(\frac{1}{n})$, $\Pr[X]>\beta$ and $\Pr[\mathcal{A}|X]>\alpha$ for every $n\in S$.

For every  integer $i$ let $B_i$ be the event of graphs on $\set{1}{n}$, in which
\begin{itemize}
\item
 There are exactly $i$ edges between vertices $\set{1}{r}$ and $\set{r+1}{n}$.
\item
 There are no edges connecting two vertices from $\set{1}{r}$.
\item
 There is no vertex in $\set{r+1}{n}$ having more than one neighbour in $\set{1}{r}$.
\end{itemize}
Then there exists $i \ge 0$ such that $\Pr[\mathcal{A}|(B_i\cap X)]>\alpha-\eps$ for infinitely many $n\in\ S$.
\end{lemma}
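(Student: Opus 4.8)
The plan is to treat the events $B_i$, $i\ge 0$, as a decomposition of a near-certain event and then extract a single good index by an averaging-plus-pigeonhole argument. First I would observe that the $B_i$ are pairwise disjoint, and that their union $B=\bigcup_{i\ge0}B_i$ is precisely the event that there are no edges inside $\set{1}{r}$ and that no outside vertex has two neighbours inside $\set{1}{r}$ (the value of $i$ merely records how many inside--outside edges there are). Since $r$ is a constant and $p=\Theta(1/n)$, a crude union bound gives $\Pr[\neg B]\le \binom{r}{2}p+(n-r)\binom{r}{2}p^2=O(1/n)\to 0$. Because $\Pr[X]>\beta$ for $n\in S$, we also get $\Pr[\neg B\mid X]\le \Pr[\neg B]/\Pr[X]\le \Pr[\neg B]/\beta\to 0$, so for all sufficiently large $n\in S$ we have $\Pr[\neg B\mid X]<\eps/2$.

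Next I would decompose the hypothesis $\Pr[\mathcal{A}\mid X]>\alpha$ along $B$. Writing $\Pr[\mathcal{A}\mid X]=\Pr[\mathcal{A}\cap B\mid X]+\Pr[\mathcal{A}\cap\neg B\mid X]$ and using the disjointness of the $B_i$, for large $n\in S$ I obtain $\sum_{i\ge0}\Pr[\mathcal{A}\mid B_i\cap X]\,\Pr[B_i\mid X]=\Pr[\mathcal{A}\cap B\mid X]>\alpha-\eps/2$ (indices $i$ with $\Pr[B_i\cap X]=0$ contribute nothing and may be ignored). This exhibits a number exceeding $\alpha-\eps/2$ as a weighted sum of the quantities $\Pr[\mathcal{A}\mid B_i\cap X]$, the weights $\Pr[B_i\mid X]$ summing to at most $1$.

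The main obstacle is that a naive averaging over all $i$ only yields, for each $n$, an index $i(n)$ with $\Pr[\mathcal{A}\mid B_{i(n)}\cap X]>\alpha-\eps$, and a priori $i(n)$ could tend to infinity, defeating the final pigeonhole step. To rule this out I would truncate to a bounded range of indices, and this is exactly where the hypothesis $p=\Theta(1/n)$ is essential. The number of edges between $\set{1}{r}$ and $\set{r+1}{n}$ is distributed as $\mathrm{Bin}\big(r(n-r),p\big)$, whose mean $r(n-r)p$ is bounded uniformly in $n$; hence for any $\delta>0$ there is a constant $I$ with $\Pr\big[\bigcup_{i>I}B_i\big]<\delta$ for all $n$. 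Choosing $\delta<\eps\beta/4$ gives $\sum_{i>I}\Pr[\mathcal{A}\mid B_i\cap X]\,\Pr[B_i\mid X]\le\Pr\big[\bigcup_{i>I}B_i\mid X\big]<\delta/\beta<\eps/4$, so that $\sum_{i=0}^{I}\Pr[\mathcal{A}\mid B_i\cap X]\,\Pr[B_i\mid X]>\alpha-3\eps/4$.

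Finally I would conclude by averaging over the finite range and then pigeonholing. If every $i\in\{0,\dots,I\}$ satisfied $\Pr[\mathcal{A}\mid B_i\cap X]\le\alpha-\eps$, the last displayed sum would be at most $(\alpha-\eps)\sum_{i=0}^{I}\Pr[B_i\mid X]\le\alpha-\eps$, contradicting the bound $\alpha-3\eps/4$. Hence for every sufficiently large $n\in S$ there is some $i(n)\in\{0,\dots,I\}$ with $\Pr[\mathcal{A}\mid B_{i(n)}\cap X]>\alpha-\eps$. Since $\{0,\dots,I\}$ is finite while $S$ is infinite, some fixed value $i^*\in\{0,\dots,I\}$ is attained by $i(n)$ for infinitely many $n\in S$, and this $i^*$ is the index required by the lemma.
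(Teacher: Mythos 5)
Your proposal is correct and takes essentially the same approach as the paper's proof: both condition on $X$, decompose over the disjoint events $B_i$, kill the structural violations by a union bound $\binom{r}{2}p+(n-r)\binom{r}{2}p^2=o(1)$, truncate the cross-edge count to a bounded range using $\Ex[v]=O(1)$ (from $p=\Theta(\frac{1}{n})$) together with $\Pr[\cdot\,|\,X]\le\Pr[\cdot]/\beta$, and finish by averaging over the finite index range and pigeonholing to get one $i$ that works for infinitely many $n\in S$. The differences are purely cosmetic (the paper packages your $\neg B$ and tail $\bigcup_{i>I}B_i$ as events $D$ and $C$ with cutoff $t$, and its slightly different $\eps$-bookkeeping).
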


\begin{proof}[ of lemma \ref{Bi}.]
Denote by $v$ the random variable counting the number of edges between vertices $\set{1}{r}$ and $\set{r+1}{n}$.
$\Ex[v]=p\cdot r(n-r)=O(1)$, as $p=O(\frac{1}{n})$. Hence, there is a $t\in\mathbb{N}$, such that $\Pr[v\ge t]\le\eps\beta/4$. 

Denote by $C$ the event that $v\ge t$, and by $D$ the event in which either there is an edge between vertices from $\set{1}{r}$, or there are vertices from $\set{r+1}{n}$ that are connected by edges to more than one vertex from $\set{1}{r}$. Note that $\Pr[D]\le \binom{r}{2} \cdot p + (n-r) \cdot \binom{r}{2} \cdot (p)^2 = o(1)$. Hence, for large enough $n$, $\Pr[D]\le\eps\beta/4$.

Using the fact that $\Pr[X]>\beta$, We get
\begin{align*}
\alpha \le  \Pr[\mathcal{A}] &
\le \sum_{i=0}^{t-1} \Pr[(\mathcal{A} \cap B_i)|X]  +  \Pr[(\mathcal{A} \cap C)|X]  +  \Pr[(\mathcal{A} \cap D)|X]   \le \\
&\le \sum_{i=0}^{t-1} \Pr[(\mathcal{A} \cap B_i)|X]  +  \Pr[C|X]  +  \Pr[D|X] \le \\
&\le \sum_{i=0}^{t-1} \Pr[(\mathcal{A} \cap B_i)|X]  +  \Pr[C]/\Pr[X]  +  \Pr[D]/\Pr[X] \le \\
&\le \sum_{i=0}^{t-1} \Pr[(\mathcal{A} \cap B_i)|X]  + \eps /2
\end{align*}
for large enough $n \in S$. Therefore
\begin{equation*}
\alpha - \eps/2 \le \sum_{i=0}^{t-1} \Pr[(\mathcal{A} \cap B_i)|X] = \sum_{i=0}^{t-1} \Pr[B_i|X] \cdot \Pr[\mathcal{A} \vert (B_i\cap X)].
\end{equation*}
Hence, as the events $B_i, \  i \in \set{0}{t-1}$ are non intersecting, for every $n\in S$, there is $i_n \in \set{0}{t-1}$ such that
\begin{equation*}
Pr[\mathcal{A} \vert (B_{i_n}\cap X)] \ge\alpha - \eps/2.
\end{equation*}
Let $i$ be such that $i_n=i$ for infinitely many $n\in S$.
\end{proof}

The following lemma will help restricting the discussion of some graph property $\cal{A}$, to the property of
 having a subgraph from $\cal{A}$ with exactly $m$ edges between vertices $\set{1}{r}$ and $\set{r+1}{n}$.
\begin{lemma}
\label{Ae}
Let $\alpha,\eps>0$, $r,M\in\mathbb{N}$, $p=p(n)$, and $\mathcal{A}=\mathcal{A}(n)$ a graph property.

For every integer $m$ denote by $\mathcal{A}_m$ the event that $G(n,p)$ contains as a subgraph, a graph from $\cal{A}$ with exactly $m$ edges between vertices $\set{1}{r}$ and $\set{r+1}{n}$.

 Let $X$ be some event in $G(n,p)$. Assume that $S\subseteq \mathbb{N}$ is an infinite set such that for every $n\in S$:
\begin{itemize}
\item
 $\Pr[\mathcal{A}|X]\ge\alpha$
\item
$\Pr[\mathcal{A}_0|X]<\alpha-\eps$
\item
$\Pr[\mathcal{A}_m|X]=0$, for every $m> M$.
\end{itemize}
Then there exist $\beta>0$, $m\in\mathbb{N}$ such that $\Pr[\mathcal{A}_m|\bar{\mathcal{A}_0}\cap X]\ge \beta$ for an infinite number of $n\in\ S$.
\end{lemma}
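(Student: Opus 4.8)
The plan is to decompose the event $\mathcal{A}$ according to the number $m$ of crossing edges in a witnessing subgraph, and then to argue that the gap $\eps$ between $\Pr[\mathcal{A}|X]$ and $\Pr[\mathcal{A}_0|X]$ must be accounted for by the events $\mathcal{A}_m$ with $1\le m\le M$. A double application of pigeonhole — first to pin down a heavy value of $m$ for each $n$, then to fix a single $m$ good for infinitely many $n$ — finishes the argument.

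First I would record the trivial inclusion $\mathcal{A}\subseteq\bigcup_{m\ge 0}\mathcal{A}_m$: if $G(n,p)\in\mathcal{A}$ then $G$ is a subgraph of itself lying in $\mathcal{A}$, and it has some number $m$ of edges between $\set{1}{r}$ and $\set{r+1}{n}$, so $G\in\mathcal{A}_m$. Conditional on $X$ the tail contributes nothing, since $\Pr[\mathcal{A}_m|X]=0$ for $m>M$ by hypothesis; hence, up to a set of conditional probability zero, we may write $\mathcal{A}=\mathcal{A}_0\cup\bigcup_{m=1}^M(\mathcal{A}_m\cap\bar{\mathcal{A}_0})$, where anything in $\mathcal{A}_m\cap\mathcal{A}_0$ is already absorbed into $\mathcal{A}_0$.

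Applying a union bound together with the two probability hypotheses then gives
\begin{equation*}
\alpha\le\Pr[\mathcal{A}|X]\le\Pr[\mathcal{A}_0|X]+\sum_{m=1}^M\Pr[\mathcal{A}_m\cap\bar{\mathcal{A}_0}|X]<(\alpha-\eps)+\sum_{m=1}^M\Pr[\mathcal{A}_m\cap\bar{\mathcal{A}_0}|X],
\end{equation*}
so that $\sum_{m=1}^M\Pr[\mathcal{A}_m\cap\bar{\mathcal{A}_0}|X]>\eps$ for every $n\in S$. By pigeonhole over the finite index set $\set{1}{M}$, for each such $n$ there is some $m_n\in\set{1}{M}$ with $\Pr[\mathcal{A}_{m_n}\cap\bar{\mathcal{A}_0}|X]>\eps/M$. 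Dividing by $\Pr[\bar{\mathcal{A}_0}|X]\le 1$ (which is positive, as the joint probability is) turns this into the conditional bound $\Pr[\mathcal{A}_{m_n}|\bar{\mathcal{A}_0}\cap X]>\eps/M$.

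Finally, since $m_n$ takes values in the finite set $\set{1}{M}$ while $S$ is infinite, some fixed value $m$ is attained for infinitely many $n\in S$; taking $\beta=\eps/M$ yields the claim. The only point requiring care is the bookkeeping of the double pigeonhole — first choosing a heavy $m_n$ for each $n$, then extracting a single $m$ good for infinitely many $n$ — together with checking that conditioning on $\bar{\mathcal{A}_0}\cap X$ is legitimate. I do not expect any genuine obstacle beyond this; the lemma is essentially a counting identity for the decomposition of $\mathcal{A}$ into the $\mathcal{A}_m$.
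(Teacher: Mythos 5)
Your proposal is correct and follows essentially the same route as the paper's proof: the same decomposition $\mathcal{A}_0\cup\bigcup_{m=1}^{M}(\mathcal{A}_m\cap\bar{\mathcal{A}_0})$, the same union bound yielding $\sum_{m=1}^{M}\Pr[\mathcal{A}_m\cap\bar{\mathcal{A}_0}\vert X]\ge\eps$, the same double pigeonhole, and the same final step (your division by $\Pr[\bar{\mathcal{A}_0}\vert X]\le 1$ is exactly the paper's inequality $\Pr[\mathcal{A}_m\vert\bar{\mathcal{A}_0}\cap X]\ge\Pr[\mathcal{A}_m\cap\bar{\mathcal{A}_0}\vert X]$). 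Your explicit remark that only the inclusion $\mathcal{A}\subseteq\bigcup_{m}\mathcal{A}_m$ is needed is a minor point of extra care over the paper's stated equality, but the argument is the same.
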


\begin{proof}[ of lemma \ref{Ae}.]
For every $n\in S$:
\begin{align*}
\alpha & \le \Pr[\mathcal{A}|X]= \\
&=\Pr[\cup_{m \in \set{0}{M}} \mathcal{A}_m \vert X] =  \\
&=\Pr[\mathcal{A}_0 \cup (\cup_{m \in \set{1}{M}} (\mathcal{A}_m \cap \bar{\mathcal{A}_0})) \vert X] \le    \\
&\le\Pr[\mathcal{A}_0 \vert X] + \sum_{m \in \set{1}{M}} \Pr[\mathcal{A}_m \cap \bar{\mathcal{A}_0} \vert X].
\end{align*}
As $\Pr[\mathcal{A}_0 \vert X]<\alpha-\eps$,
\begin{equation*}
\sum_{m \in \set{1}{M}} \Pr[\mathcal{A}_m \cap \bar{\mathcal{A}_0} \vert X] \ge \eps.
\end{equation*}
Hence, for every $n\in S$ there is $m_n \in \set{1}{M}$, such that $\Pr[\mathcal{A}_{m_n} \cap \bar{\mathcal{A}_0} \vert X] \ge \eps/M \triangleq \beta$. Let $m\in\set{1}{M}$ be such that $m_n=m$ for an infinite number of $n\in\ S$.  As  $\Pr [X \vert Y \cap Z] = \frac{\Pr[X \cap Y \vert Z]}{\Pr[Y \vert Z]} \ge \Pr[X \cap Y \vert Z]$, for every three events $X$, $Y$ and $Z$, we get
\begin{equation*}
\Pr[\mathcal{A}_{m} \vert \bar{\mathcal{A}_0} \cap X] \ge \beta
\end{equation*}
for infinitely many $n\in S$.
\end{proof}

The following lemma will assume the existence of a large number of "almost $k$-regular"
subgraphs , and conclude that the addition of a constant number of edges
has a significant effect on the probability of having a $k$-regular subgraph.
Before stating the lemma, consider the following definition.
\begin{define}
\label{critical}
Let $k\in\mathbb{N}$ and $G$ a graph.  Call a set $L$ of vertices of $G$ \emph{$k$-critical },
if $G$ contains a subgraph in which all vertices have degree $k$, apart from $L$'s vertices that have degree
 $k-1$.
\end{define}

\begin{lemma}
\label{edges}
let $\alpha,\beta>0$, $l,k\in\mathbb{N}$ and $S\subseteq \mathbb{N}$ an infinite set. 
Let $\cal{A}$ be the property of containing a $k$-regular subgraph. 
Let $p=p(n)$ be such that $\Pr[\mathcal{A}]<1-\alpha$.

Assume that for every $n\in S$, when conditioning on $G(n,p)$ having no $k$-regular subgraph, 
a random set of vertices of size $2l$ is $k$-critical with probability at least $\beta$.

Then for infinitely many $n\in S$ the following holds:
Conditioned on $G \in G(n,p)$ not having a $k$-regular subgraph,
the addition of a random set of $l$ edges to $G$ will yield a $k$-regular subgraph
with probability at least $\beta/2$.
\end{lemma}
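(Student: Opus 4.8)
The plan is to couple the edge-addition experiment with the vertex-selection experiment of the hypothesis. Interpret ``a random set of $l$ edges'' as a uniformly chosen $l$-element subset $M$ of the $\binom{n}{2}$ possible pairs, drawn independently of $G$, and write $G+M$ for $G$ together with these edges. Let $\bar{\mathcal{A}}$ be the event that $G$ has no $k$-regular subgraph (so $\Pr[\bar{\mathcal{A}}]>\alpha$), and let $V(M)$ be the set of endpoints of $M$. The core observation is a repair step: if $V(M)$ consists of $2l$ distinct vertices that form a $k$-critical set, witnessed by a subgraph $H\subseteq G$, and if no edge of $M$ lies in $H$, then $G+M$ contains a $k$-regular subgraph. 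Indeed, distinctness of the endpoints means $M$ is a perfect matching on $V(M)$, so in $H\cup M$ every vertex of $V(M)$ gains exactly one incident edge and its degree rises from $k-1$ to $k$, while all other vertices of $H$ keep degree $k$; hence $H\cup M$ is a $k$-regular subgraph of $G+M$.

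Next I would set up the coupling. Let $D$ be the event that the $l$ pairs of $M$ are vertex-disjoint. A routine comparison of the number of $l$-edge matchings with $\binom{\binom{n}{2}}{l}$ gives $\Pr[D]\to1$ as $n\to\infty$. Conditioned on $D$, the set $M$ is uniform over all $l$-edge matchings, so by symmetry of the vertex labels $V(M)$ is distributed uniformly over all $2l$-element vertex sets, and this is independent of $G$. Since $M$ is chosen independently of $G$, conditioning further on $\bar{\mathcal{A}}$ affects only the law of $G$, so
\begin{equation*}
\Pr[V(M)\text{ is }k\text{-critical}\mid D\cap\bar{\mathcal{A}}]=\Pr[\text{a uniform }2l\text{-set is }k\text{-critical}\mid\bar{\mathcal{A}}]\ge\beta,
\end{equation*}
the last inequality being exactly the hypothesis.

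It remains to control the chance that the repair step fails because some edge of $M$ already lies in the witness $H$; as $H\subseteq G$ it suffices to bound the probability that some edge of $M$ lies in $G$. For every fixed pair $e$ we have $\Pr[e\in G\mid\bar{\mathcal{A}}]\le\Pr[e\in G]/\Pr[\bar{\mathcal{A}}]<p/\alpha$, using $\Pr[\bar{\mathcal{A}}]>\alpha$, so a union bound over the $l$ edges of $M$ makes this failure probability at most $lp/\alpha$. Because $\mu(p)<1-\alpha$ stays bounded away from $1$ while the property has threshold $\Theta(1/n)$, we must have $p\to0$ along $S$, whence $lp/\alpha\to0$. Combining the three estimates,
\begin{equation*}
\Pr[\,G+M\text{ has a }k\text{-regular subgraph}\mid\bar{\mathcal{A}}\,]\ge\Pr[D]\cdot\Bigl(\beta-\tfrac{lp}{\alpha}\Bigr),
\end{equation*}
and for all sufficiently large $n\in S$ the right-hand side exceeds $\beta/2$. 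Since $S$ is infinite, this holds for infinitely many $n\in S$, as claimed.

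The step I expect to be the main obstacle is the coupling itself: one must check carefully that, after conditioning on $M$ being a matching, the endpoint set $V(M)$ is genuinely a uniform $2l$-set independent of $G$, and that the \emph{random} matching $M$ actually repairs the almost-$k$-regular subgraph. The repair is clean only because the added edges are fresh, i.e.\ non-edges of $G$ and hence of $H$; quantifying the small loss when an added pair happens to coincide with an edge of $G$ is the one place where the assumption $\mu(p)<1-\alpha$, forcing $p\to0$, is genuinely used.
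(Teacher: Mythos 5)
Your proposal is correct and follows essentially the same route as the paper's proof: a random set of $l$ edges is with probability $1-o(1)$ a matching avoiding the edges of $G$ (the paper bounds $\Pr[X_T\mid\bar{\mathcal{A}}]\le\Pr[X_T]/\alpha=o(1)$ just as you bound $lp/\alpha$), its endpoint set is a uniform $2l$-set that is $k$-critical with probability at least $\beta$ by hypothesis, and adding the matching raises the degree of each critical vertex from $k-1$ to $k$, completing a $k$-regular subgraph. Your write-up merely makes explicit the coupling and the use of $p\to0$ that the paper leaves implicit.
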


\begin{proof}[ of lemma \ref{edges}]
Let $T$ be some set of $l$ edges from $K_{n}$ (the complete graph on vertices $1,\ldots,n$). Let $X_T$ be the event that a graph from $G(n,p)$ contains at least one of the edges of $T$.
The probability that a random copy of $G(n,p)$ will contain none of $T$'s edges is $(1-p)^l=1-o(1)$.
Note that in $G(n,p)$: $\Pr[X_T] \ge \Pr[X_T \cap \bar{\mathcal{A}}]=\Pr[X_T | \bar{\mathcal{A}}]\Pr[\bar{\mathcal{A}}] \ge \Pr[X_T | \bar{\mathcal{A}}]\alpha$.
Therefore, as $\Pr[X_T]=o(1)$, $\Pr[X_T| \bar{\mathcal{A}}]=o(1)$.
Also, a random set of $l$ edges of $K_n$, almost surely, contains no pair of intersecting edges.
Hence, for a random graph in $G(n,p)$ with no $k$-regular subgraph, and a random choice of $l$ edges from $K_n$, the probability that the edges will be non intersecting and will not appear in the graph is $1-o(1)$.

It follows from the above and from the conditions of the lemma, that for large enough $n\in S$, conditioning on $G(n,p)$ not having a $k$-regular subgraph, for a random set of $l$ edges from $K_n$, with probability at least $\beta/2$ the following holds: The chosen edges are non intersecting, do not appear in the graph, and their vertices form a $k$-critical set.
The union of the described subgraph and the set of edges is a $k$-regular graph.
 Hence, the addition of a random set of $l$ edges to a graph from $G(n,p)$ having no $k$-regular subgraph, will, with probability at least $\beta/2$, yield a $k$-regular subgraph.
\end{proof}

The following lemma shows that for every graph property, the addition of a small number of random edges
to $G(n,p)$, has a negligible effect on the probability of having the property. 
Its proof can be found in \cite{k-color}.

Let us consider a random graph obtained in the following way. 
First pick a random graph in $G(n,p)$, then choose uniformly at random a set of M pairs of vertices. 
For each pair of vertices, connect the vertices by an edge if they are not already connected. 
Denote this random graph model by $G(n,p,+M)$. 
For a graph property $\cal{A}$, let $\mu^+(p,M)=\mu^+(\mathcal{A},n,p,M)$ be the probability that the resulting graph has property $\cal{A}$.
Note that the number of edges in $G(n,p)$ is a binomial random variable, which is approximately normal with deviation $\sqrt{p\binom{n}{2}}$. Hence it is reasonable to believe that if $M=o(\sqrt{p\binom{n}{2}})$, the two random graph models ($G(n,p)$ and $G(n,p,+M)$) are almost the same. This idea is captured in the following lemma.

\begin{lemma}
\label{addedges}
Let $\cal{A}$ be a monotone graph property.
If $M=o\left(\sqrt{p \binom{n}{2}}\right)$, then $|\mu(p)-\mu^+(p,M)|=o(1)$.
\end{lemma}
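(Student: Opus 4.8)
The plan is to avoid comparing $G(n,p)$ directly with $G(n,p')$ for a slightly larger $p'$: since $\mathcal{A}$ may itself have a sharp threshold near $p$, the quantity $\mu(p')-\mu(p)$ need not be small even when $p'-p$ is of order $M/\binom{n}{2}$, so a naive stochastic-domination coupling fails. Instead I would route everything through the \emph{uniform} model $G(n,N)$, the random graph with exactly $N$ edges chosen uniformly, and exploit the fact that the natural fluctuation of the number of edges (of order $\sigma=\sqrt{p(1-p)\binom{n}{2}}$) is much larger than $M$, so that adding $M$ edges is a negligible perturbation of the edge count. Write $m=\binom{n}{2}$ and set $\mu_U(N)=\Pr[G(n,N)\in\mathcal{A}]$; since $\mathcal{A}$ is monotone, $\mu_U$ is non-decreasing in $N$.

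The first step is two conditioning identities. It is standard that $G(n,p)$ conditioned on having exactly $N$ edges is distributed as $G(n,N)$, so $\mu(p)=\Ex[\mu_U(N_p)]$ with $N_p\sim\mathrm{Bin}(m,p)$. The key structural step, which I expect to be the main obstacle, is the analogous statement for the perturbed model: I claim that $G(n,p,+M)$, conditioned on its final number of edges being $N'$, is \emph{also} distributed as $G(n,N')$. This should follow from a symmetry count: for a fixed target edge set $F$ with $|F|=N'$, the probability that the base graph with edge set $E$ together with the sampled pair-set $S$ satisfies $E\cup S=F$ can be computed by summing over $|E|=j$, and the number of admissible pairs $(E,S)$ with $E\cup S=F$ depends on $F$ only through $N'$ (the overlaps $S\cap E$ are handled by a binomial coefficient depending only on $j,M,N'$); hence the conditional law is uniform. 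Granting this, $\mu^+(p,M)=\Ex[\mu_U(N'_p)]$, where $N'_p$ is the final edge count.

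The second step is to couple the two edge counts. The perturbation only adds edges, so $N_p\le N'_p\le N_p+M$ in the obvious coupling, and monotonicity of $\mu_U$ gives
\[
0 \le \mu^+(p,M)-\mu(p) = \Ex[\mu_U(N'_p)-\mu_U(N_p)] \le \Ex[\mu_U(N_p+M)-\mu_U(N_p)].
\]
Finally I would bound the right-hand side by a smoothing estimate. Using the layer-cake representation $\mu_U(N)=\int_0^1 \mathbf{1}[N\ge\tau(u)]\,\ud u$ for the monotone bounded function $\mu_U$ (where $\tau(u)=\min\{N:\mu_U(N)>u\}$), the difference becomes $\int_0^1 \Pr[\tau(u)-M\le N_p\le \tau(u)-1]\,\ud u$, and each inner probability is a sum of at most $M$ point probabilities of $\mathrm{Bin}(m,p)$. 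By the standard local bound $\max_N \Pr[\mathrm{Bin}(m,p)=N]=O(1/\sigma)$, this is $O(M/\sigma)$, which is $o(1)$ precisely because $M=o(\sqrt{p\binom{n}{2}})=o(\sigma)$ (here $1-p$ is bounded below, since in the applications $p=\Theta(1/n)$). Combined with the already-noted lower bound $0\le\mu^+(p,M)-\mu(p)$ coming from monotonicity of $\mathcal{A}$, this yields $|\mu(p)-\mu^+(p,M)|=o(1)$.
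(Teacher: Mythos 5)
Your proof is correct, and it is essentially the argument the paper itself relies on: the paper gives no proof of this lemma, deferring it to \cite{k-color}, and the proof there takes the same route you do --- condition both $G(n,p)$ and $G(n,p,+M)$ on their final edge count (each conditional law being uniform $G(n,N)$, which in your second step follows even more cleanly from exchangeability of the $\binom{n}{2}$ edge slots under the joint law of the base graph and the added $M$-set than from your counting argument), then observe that the two edge counts differ by at most $M$ under the obvious coupling and that shifting a $\mathrm{Bin}\bigl(\binom{n}{2},p\bigr)$ variable by $M$ changes $\Ex[\mu_U(\cdot)]$ by at most $M$ times the maximal point mass, i.e.\ $O(M/\sigma)=o(1)$ with $\sigma=\sqrt{p(1-p)\binom{n}{2}}$. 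Two small remarks: your bound needs $1-p$ bounded away from $0$ (so that $\sigma=\Theta\bigl(\sqrt{p\binom{n}{2}}\bigr)$), which you rightly note is harmless since the lemma is applied with $p=\Theta(1/n)$; and your opening motivation --- that a comparison with $G(n,p')$ for $p'-p=O\bigl(M/\binom{n}{2}\bigr)$ must fail because $\mathcal{A}$ could have a sharp threshold at $p$ --- is actually not correct, since the very same local-limit bound gives $\frac{\ud\mu}{\ud p}=O\bigl(\binom{n}{2}/\sigma\bigr)$ and hence $\mu(p')-\mu(p)=O(M/\sigma)=o(1)$; a monotone property cannot jump within a window of width $o\bigl(\sigma/\binom{n}{2}\bigr)$. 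This inaccurate remark does not affect your proof, which is self-contained and sound.
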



\section{Proof of the main theorem}
\label{proofs}

\begin{proof}[ of theorem \ref{regular}]
Let $k \geq 3$ and $\mathcal{A}^*$ be the property of having a $k$ - regular subgraph.
 Assume, for the sake of contradiction, that $\mathcal{A}^*$ does not have a sharp threshold.
Hence, using corollary \ref{coarse} there exist $\alpha^*,\beta>0$, $p=p(n)$, an infinite set $S\subseteq\mathbb{N}$ and a fixed graph $H$ such that for every $n \in S$:
\begin{itemize}
\item
 $\alpha^* <\mu( p)=\Pr[G(n,p(n))\in \mathcal{A}^*] < 1-\alpha^*$.
\item
The probability that a copy of $H$ appears as a subgraph in $G(n,p)$ is at least $\beta$.
\item
$\Pr[\mathcal{A}^*\vert H]>1-\alpha^*/6$ (here we condition on the appearance of some specific copy of $H$).
\end{itemize}

By \cite{k-regular}, $p=\Theta(\frac{1}{n})$.
Hence, using the fact that the probability of a copy of $H$ appearing in $G(n,p)$ is at least $\beta$, $H$ does not contain a $k$-regular subgraph. This is true because, otherwise, some fixed $k$-regular graph would have appeared in $G(n,p)$ with probability at least $\beta$, which is a contradiction to the fact that a threshold function for such a graph is $n^{-2/k}=\omega(\frac{1}{n})$.

Denote the number of vertices in $H$ by $r$ and choose some specific copy of $H$ on the vertices labelled $1$ to $r$.

In lemma \ref{Bi}, take $\cal{A}$ to be the property of the graphs whose union with the specific copy of $H$ described above contains a $k$-regular subgraph, $X=G(n,p)$ and $\alpha=1-\alpha^*/6$,  $\eps=\alpha^*/3$. Let $B_i$ be, as in the lemma, the event in which:
\begin{itemize}
\item
There are exactly $i$ edges between vertices $\set{1}{r}$ and $\set{r+1}{n}$.
\item
There are no edges between two vertices from $\set{1}{r}$.
\item
No vertex from $\set{r+1}{n}$ has more than one neighbour in $\set{1}{r}$.
\end{itemize}
By lemma \ref{Bi}, there exists $i \ge 0$ such that $\Pr[\mathcal{A}|B_i]\ge 1-\alpha^*/2$, for every $n\in S_1$, where $S_1\subseteq S$ is an infinite set.
Note that
\begin{equation*}
\Pr[\mathcal{A}|B_0]=\Pr[G(n-r,p(n))\in \mathcal{A}^*]\le\Pr[G(n,p(n))\in \mathcal{A}^*]\le 1-\alpha^*,
\end{equation*}
as  the event of $G(n,p)$ having a $k$-regular subgraph contains the event of $G(n,p)$ having a $k$-regular subgraph on vertices $\set{r+1}{n}$. Hence $i \ne 0$.

In lemma \ref{Ae}, take $\cal{A}$ as before, to be the graph property of all graphs whose union with the specific copy of $H$ is in $\mathcal{A}^*$, take $X=B_i$, $\alpha=1-\alpha^*/2$, $\eps=\alpha^*/4$.
Let $\mathcal{A}_m$ be, as in the lemma, the property of graphs having a subgraph from $\cal{A}$ with exactly $m$ edges between vertices $\set{1}{r}$ and $\set{r+1}{n}$.
Note that
\begin{itemize}
\item
$\Pr[\mathcal{A}|B_i]\ge 1-\alpha^*/2=\alpha$, as stated above as a conclusion to lemma \ref{Bi}.
\item
$\Pr[\mathcal{A}_0|B_i]=\Pr[ G(n-r,p(n)) \in \mathcal{A}^*] \le \Pr[ G(n,p(n)) \in \mathcal{A}^*]\le 1-\alpha^*<1-\frac{3}{4}\alpha^*=\alpha-\eps$.
\item
$\Pr[\mathcal{A}_m|B_i]=0$, as for every $m>i$, when conditioning on $B_i$, there are exactly $i$ edges between vertices $\set{1}{r}$ and $\set{r+1}{n}$, so there can be no subgraph having more than $i$ edges between these sets of vertices.
\end{itemize}
By lemma \ref{Ae}, there exist  $m \in \mathbb{N}$ and $\gamma>0$ such that $\Pr[\mathcal{A}_m|\bar{\mathcal{A}_0}\cap B_i]\ge \gamma$,
 for every $n\in S_2\subseteq S_1$, where $S_2$ is an infinite set. This leads to the following conclusion (recall that by definition \ref{critical}, a set is $k$-critical with respect to a certain graph, if there is a subgraph such that all its vertices are of degree $k$, except for the set's vertices which are of degree $k-1$).

\begin{cor}
\label{clear}
For every $n\in S_2$, conditioning on $G(n-r,p)$ not having a $k$-regular subgraph, 
a random set of $m$ vertices is $k$-critical with probability at least $\delta$, for some $\delta>0$.
\end{cor}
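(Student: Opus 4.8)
The plan is to unwind what the event $\mathcal{A}_m\cap\bar{\mathcal{A}_0}\cap B_i$ says about the induced graph $G':=G(n,p)[\set{r+1}{n}]$, and then to convert an ``\emph{exists} a critical subset'' statement into an ``a \emph{random} subset is critical'' statement by a first-moment/symmetry argument. First I would record two facts that make conditioning on $B_i$ convenient. Since $B_i$ is a condition only on the cross-edges between $\set{1}{r}$ and $\set{r+1}{n}$ and on the edges inside $\set{1}{r}$, the graph $G'$ is, conditioned on $B_i$, still distributed as $G(n-r,p)$ and independent of the cross-edge configuration. Moreover, because the chosen copy of $H$ lives entirely on $\set{1}{r}$ and contains no $k$-regular subgraph, under $B_i$ the event $\mathcal{A}_0$ coincides with the event that $G'$ itself contains a $k$-regular subgraph; hence $\bar{\mathcal{A}_0}$ is exactly the conditioning appearing in the corollary.

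Next I would pin down the distribution of the set $R\subseteq\set{r+1}{n}$ of right-endpoints of the $i$ cross-edges. A direct count shows that, conditioned on $B_i$, each $i$-subset of $\set{r+1}{n}$ arises from exactly $r^i$ equally likely cross-edge configurations (one per choice of a left neighbour for each of its vertices), so $R$ is distributed uniformly over the $i$-subsets of $\set{r+1}{n}$; being a function of the cross-edges, it is independent of $G'$, and this independence survives conditioning on $\bar{\mathcal{A}_0}$, which is measurable with respect to $G'$.

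Then comes the structural correspondence. If $\mathcal{A}_m\cap B_i$ holds then $G(n,p)\cup H$ contains a $k$-regular subgraph $F$ using exactly $m$ cross-edges; since under $B_i$ each right vertex has at most one left neighbour, these $m$ edges meet $m$ distinct vertices forming a set $R'\subseteq R$. Restricting $F$ to $\set{r+1}{n}$ yields a subgraph of $G'$ in which the vertices of $R'$ have degree $k-1$ and every other vertex has degree $k$, i.e. $R'$ is a $k$-critical $m$-set of $G'$ in the sense of Definition \ref{critical}. Thus $\mathcal{A}_m\cap\bar{\mathcal{A}_0}\cap B_i$ forces the random set $R$ to contain a $k$-critical $m$-subset of $G'$, an event of probability at least $\gamma$ for $n\in S_2$.

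Finally I would run a first-moment argument conditioned on $G'$. Writing $N(G')$ for the number of $k$-critical $m$-subsets of $G'$ and $Z$ for the number of these contained in $R$, Markov's inequality gives $\gamma\le\Pr[Z\ge 1\mid\bar{\mathcal{A}_0}\cap B_i]\le\Ex[Z\mid\bar{\mathcal{A}_0}\cap B_i]$, while uniformity of $R$ gives $\Pr[Q_0\subseteq R]=\binom{i}{m}/\binom{n-r}{m}$ for each fixed $m$-set $Q_0$, so that $\Ex[Z\mid G']=\binom{i}{m}\,N(G')/\binom{n-r}{m}$. Since $N(G')/\binom{n-r}{m}$ is precisely the probability that a uniformly chosen $m$-set is $k$-critical in $G'$, averaging over $G'$ conditioned on $\bar{\mathcal{A}_0}$ shows that a random $m$-set is $k$-critical with probability at least $\delta:=\gamma/\binom{i}{m}>0$, which is the claim. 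The one point to handle carefully — and the main obstacle — is exactly this passage from ``a random $i$-superset contains a critical $m$-set'' to ``a random $m$-set is critical'': the factor $\binom{i}{m}$, which depends only on the fixed constants $i$ and $m$ and is therefore harmless, is the price of the inversion, and keeping $R$ independent of $G'$ throughout is what makes the averaging legitimate.
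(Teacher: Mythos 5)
Your proof is correct and takes essentially the same route as the paper: you read off from $\mathcal{A}_m\cap B_i$ that the uniform $i$-set of neighbours of $\set{1}{r}$ (independent of the restricted graph $G'\sim G(n-r,p)$) contains a $k$-critical $m$-subset with probability at least $\gamma$, and then pass to a uniformly random $m$-set at the cost of the factor $\binom{i}{m}$, obtaining $\delta=\gamma/\binom{i}{m}$ exactly as the paper does. Your write-up merely makes explicit the distributional facts the paper leaves implicit (uniformity of $R$ and its independence of $G'$, the identification of $\bar{\mathcal{A}_0}$ with the conditioning on $G'$ having no $k$-regular subgraph, and the first-moment inversion via $\Pr[Q_0\subseteq R]=\binom{i}{m}/\binom{n-r}{m}$).
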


\begin{proof}
Let us show first that for every $n\in S_2$, conditioning on $G(n-r,p)$ not having a $k$-regular subgraph, 
a random set of $i$ vertices has a $k$-critical subset of size $m$ with probability at least $\gamma$. \\
Look at the restriction of $G(n,p)$ to the vertex set $\set{r+1}{n}$.
Using the fact that $\Pr[\mathcal{A}_m|\bar{\mathcal{A}_0}\cap B_i]>\gamma$ the conclusion can be reached by letting the random set of size $i$ correspond to the neighbours of $\set{1}{r}$, and by choosing a subset of size $m$ to be the set of neighbours of $\set{1}{r}$  in a $k$-regular subgraph with $m$ edges between $\set{1}{r}$ and $\set{r+1}{n}$ whenever such a graph exists.

Stating this conclusion differently, 
for a random choice of a graph and a set as described above, the following holds: A random subset of 
size $m$ of the chosen set of $i$ vertices, will be $k$-critical with probability at least $\delta\triangleq\gamma/\binom{i}{m}$.

Hence, for every $n\in S_2$, with probability at least $\delta$, the following holds: When conditioning on $G(n-r,p)$ not having a $k$-regular subgraph, a random set of $m$ vertices is $k$-critical.
\end{proof}

Assume, first, that $m$ is even. 

By lemma \ref{edges}, and the fact that $\Pr[G(n-r,p)\in \mathcal{A}^*]\le\Pr[G(n,p)\in \mathcal{A}^*]\le 1-\alpha^*$, there exists an infinite set $S_3 \subseteq S_2$ such that for every $n\in S_3$, the following holds: When conditioning on $G(n-r,p)$ not having a $k$-regular subgraph, with probability at least $\delta/2$, the addition of $m/2$ edges to the graph will yield a graph containing a $k$-regular subgraph.

Recall the notations of lemma \ref{addedges}:
\begin{itemize}
\item
$\mu(\mathcal{A},n,p)\triangleq\Pr[G(n,p)\in \mathcal{A}]$.
\item
$\mu^+(\mathcal{A},n,p,M)$ is the probability that the union of a random graph from $G(n,p)$ and a random set of $M$ edges has property $\mathcal{A}$.
\end{itemize}
Using these notations, we get:

\begin{align*}
&\mu^+(\mathcal{A}^*, n-r, p, m/2) \ge \\
&\ge \mu(\mathcal{A}^*,n-r,p)+(1-\mu(\mathcal{A}^*,n-r,p))\cdot\delta/2 \ge \\
&\ge \mu(\mathcal{A}^*,n-r,p)+\alpha^*\delta/2 .
\end{align*}
Therefore
\begin{equation*}
|\mu^+(\mathcal{A}^*, n-r, p, m/2)-\mu(\mathcal{A}^*,n-r,p)| \ge \alpha^*\delta/2.
\end{equation*}
This is a contradiction to lemma \ref{addedges}.

Assume now that $m$ is odd.
Then $k$ is necessarily odd as well, because, otherwise, there exists a graph whose vertices are all of degree $k$, apart from $m$ vertices that are of degree $k-1$. This leads to a contradiction since the sum of degrees of vertices of the described graph is $k \cdot x + (k-1)m$ (where $x$ is the number of vertices of degree $k$), which is an odd number if $k$ is even.

Recall that we know that for every $n\in S_2$, conditional on $G(n-r,p)$ having no $k$-regular subgraph, a random set of vertices of size $m$ is $k$-critical with probability at least $\delta$.
By the symmetry, for every $n\in S_2$, with probability at least $\delta$, the set $\set{1}{m}$ is $k$-critical with respect to a random graph from $G(n-r,p)$ having no $k$-regular subgraphs.

\begin{claim}
\label{last}
There exists $\zeta>0$ such that when conditioning on the restriction of $G(n-r,p)$ to $\set{m+1}{n-r}$ having no $k$-regular subgraph, the set $\set{1}{m}$ is $k$-critical with probability at least $\zeta$.
\end{claim}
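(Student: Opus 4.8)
The plan is to reduce the claim to the fact established immediately above it: conditioned on $G(n-r,p)$ having no $k$-regular subgraph at all, the specific set $\set{1}{m}$ is $k$-critical with probability at least $\delta$. The only difference between that statement and the claim is the conditioning event, so the entire argument should be a short manipulation of conditional probabilities relating the two conditionings, with no new structural input.

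First I would name the three relevant events for a graph drawn from $G(n-r,p)$: let $E$ be the event that $G(n-r,p)$ contains no $k$-regular subgraph, let $F$ be the event that the restriction of $G(n-r,p)$ to $\set{m+1}{n-r}$ contains no $k$-regular subgraph, and let $K$ be the event that $\set{1}{m}$ is $k$-critical. The key observation is the inclusion $E\subseteq F$: any $k$-regular subgraph lying inside the induced subgraph on $\set{m+1}{n-r}$ uses only edges present in $G(n-r,p)$, hence is also a $k$-regular subgraph of the whole graph; so if the whole graph has none, neither does the restriction. Consequently $K\cap E\subseteq K\cap F$.

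With this in hand the computation is immediate. Since $\Pr[F]\le 1$ and $K\cap E\subseteq K\cap F$,
\begin{equation*}
\Pr[K \mid F] = \frac{\Pr[K \cap F]}{\Pr[F]} \ge \Pr[K \cap F] \ge \Pr[K \cap E] = \Pr[K \mid E]\cdot \Pr[E].
\end{equation*}
We already know $\Pr[K \mid E] \ge \delta$, and $\Pr[E] = 1 - \mu(\mathcal{A}^*, n-r, p) \ge \alpha^*$, using the monotonicity $\mu(\mathcal{A}^*,n-r,p) \le \mu(\mathcal{A}^*,n,p) \le 1-\alpha^*$ already exploited earlier in the proof. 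Thus $\Pr[K \mid F] \ge \delta\alpha^*$, and setting $\zeta \triangleq \delta \alpha^*$ proves the claim for every $n \in S_2$.

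There is essentially no hard step here; the only points to get right are the direction of the inclusion $E\subseteq F$ (we are \emph{weakening} the conditioning, from ``no $k$-regular subgraph anywhere'' to ``no $k$-regular subgraph in the smaller induced graph'') and the uniform bound $\Pr[E]\ge\alpha^*$ that keeps $\zeta$ bounded away from zero along $S_2$. The real purpose of the claim is not in its proof but in what it enables afterwards: it localizes the conditioning onto $\set{m+1}{n-r}$, a vertex set disjoint from $\set{1}{m}$. This is what will later allow one to produce two disjoint $k$-critical sets whose $2m$ degree-deficient vertices can be joined by a matching of $m$ edges, yielding a $k$-regular subgraph and hence the desired contradiction with lemma \ref{addedges} in the odd case.
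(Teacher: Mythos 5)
Your proof is correct, and it shares the paper's skeleton: both arguments rest on the inclusion $E\subseteq F$ (in the paper's notation, $B\subseteq C$, where $B$ is ``no $k$-regular subgraph on $\set{1}{n-r}$'' and $C$ is ``no $k$-regular subgraph on $\set{m+1}{n-r}$'') together with a short manipulation of $\Pr[K\mid F]=\Pr[K\cap F]/\Pr[F]$. Where you differ is in the one non-trivial estimate. The paper bounds the ratio $\Pr[B]/\Pr[C]$ from below by the probability that the vertices $\set{1}{m}$ are isolated, namely $(1-p)^{(n-r-m)m+\binom{m}{2}}$, which is bounded away from zero since $p=\Theta(1/n)$, yielding $\Pr[K\mid C]\ge\delta\zeta$ with $\zeta$ an explicit isolation probability. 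You instead discard the denominator via $\Pr[F]\le 1$ and bound $\Pr[K\cap E]=\Pr[K\mid E]\Pr[E]\ge\delta\alpha^*$, using the monotonicity $\mu(\mathcal{A}^*,n-r,p)\le\mu(\mathcal{A}^*,n,p)\le 1-\alpha^*$ --- a bound the paper itself invokes elsewhere (e.g.\ $\Pr[X]\ge\alpha^*$ immediately after the claim), so this step is fully justified. Your route is shorter and needs no structural input at all, but it buys this by leaning on the coarse-threshold hypothesis: it works only because $\Pr[E]$ is bounded away from zero along $S_2$. The paper's isolation argument is more robust, bounding $\Pr[B]/\Pr[C]$ away from zero for any $p=\Theta(1/n)$ irrespective of whether $\Pr[B]$ is itself bounded below; in the present setting that extra generality is unneeded, so both proofs are equally valid. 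One small correction to your closing remark (which is commentary, not part of the proof): the paper uses the claim not to glue two disjoint $k$-critical sets by a matching, but to re-apply lemma \ref{Bi} with an event $X$ measurable on the restriction to $\set{m+1}{n-r}$, producing a $k$-critical set of the even size $m(k-1)$ and thereby reducing the odd case to the even one.
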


\begin{proof}[ of claim \ref{last}]
Let
\begin{itemize}
\item
$\mathcal{A}'$ be the property of graphs for which $\set{1}{m}$ is $k$-critical.
\item
$B$ be the event that there is no $k$-regular subgraph on vertices $\set{1}{n-r}$.
\item
$C$ be the event that there is no $k$-regular subgraph on vertices $\set{m+1}{n-r}$.
\end{itemize}
Note that:
\begin{align*}
\Pr[B]&\ge \Pr[{\text{ \{vertices } 1, \ldots m \text{ are of degree } 0\text{\} }}\cap C]= \\
&=\Pr[\text{vertices } 1, \ldots m \text{ are of degree } 0]\cdot\Pr[C] 
\end{align*}
As the events are independent.

Therefore, there exists some $\zeta>0$ such that
\begin{equation*}
\frac{\Pr[B]}{\Pr[C]}\ge\Pr[\text{vertices } 1, \ldots m \text{ are of degree } 0]=(1-p)^{(n-r-m)m+\binom{m}{2}}\ge \zeta
\end{equation*}
as $p=\Theta(1/n)$. 

Hence for every $n\in S_2$:
\begin{equation*}
\Pr[\mathcal{A}'|C] = 
\frac{\Pr[\mathcal{A}' \cap C]}{\Pr[C]} \ge \frac{\Pr[\mathcal{A}' \cap B]}{\Pr[C]} = 
\frac{\Pr[\mathcal{A}' \cap B]}{\Pr[B]}\cdot\frac{\Pr[B]}{\Pr[C]} =
\Pr[\mathcal{A}'|B]\cdot\frac{\Pr[B]}{\Pr[C]} \ge  
\delta \zeta.
\end{equation*}

\end{proof}

In lemma \ref{Bi}, take $\mathcal{A}$ to be the property of graphs in which $\set{1}{m}$ is $k$-critical, and let $X$ be the event of there being no $k$-regular subgraph with vertices from $\set{m+1}{n-r}$. Note that $\Pr[X]=\Pr[G(n-r-e,p)\notin\mathcal{A}^*]\ge\Pr[G(n,p)\notin\mathcal{A}^*]\ge\alpha^*$. 
By the lemma, there exists $j \ge 0$, and an infinite set $S_3\subseteq S_2$ such that for every $n\in S_3$, 
\small
\begin{align*}
\zeta/2\le&\Pr[\mathcal{A}|B_j\cap X]= \\
=&\Pr[(\set{1}{m}\text{ is }k\text{-critical})|B_j\cap(\text{no }k\text{-regular subgraph with vertices from }\set{m+1}{n-r})].
\end{align*}
\normalsize
As before (in the proof to corollary \ref{clear}), by looking at the restriction of the graph to the vertices $\set{m+1}{n-r}$ and at the neighbours of $\set{1}{m}$ in a subgraph in which the only vertices not of degree $k$ are vertices $\set{1}{m}$, which are of degree $k-1$, we get:

\begin{cor}
For every $n\in S_3$, when conditioning on $G(n-r-m,p)$ not having a $k$-regular subgraph, a random set of vertices of size $m'\triangleq m(k-1)$ is $k$-critical, with probability at least $\zeta/2$.
\end{cor}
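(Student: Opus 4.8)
The plan is to mirror the argument already used to establish corollary \ref{clear}, now translating the statement ``$\set{1}{m}$ is $k$-critical'' (which we control through the bound $\zeta/2\le\Pr[\mathcal{A}|B_j\cap X]$ obtained from lemma \ref{Bi}) into a statement about a random set of \emph{large} vertices being $k$-critical in the restricted graph. Throughout I would keep in mind that, in this application of lemma \ref{Bi}, the small set is $\set{1}{m}$, the large set is $\set{m+1}{n-r}$, the event $X$ is that there is no $k$-regular subgraph on $\set{m+1}{n-r}$, and that conditioned on $B_j\cap X$ the graph induced on $\set{m+1}{n-r}$ is exactly $G(n-r-m,p)$ conditioned on having no $k$-regular subgraph.

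First I would analyse the structure of any subgraph witnessing that $\set{1}{m}$ is $k$-critical under $B_j$. Since $B_j$ forbids edges inside $\set{1}{m}$ and lets each vertex of $\set{1}{m}$ reach the large set only through the $j$ matching edges, every vertex of $\set{1}{m}$ must attain its prescribed degree $k-1$ using $k-1$ matching edges. Because no vertex of $\set{m+1}{n-r}$ has more than one neighbour in $\set{1}{m}$, these matching edges land on distinct large vertices, so exactly $m(k-1)=m'$ distinct large vertices are used. The key observation is then that, restricting the witnessing subgraph to $\set{m+1}{n-r}$, each of these $m'$ endpoints loses precisely one incident edge, namely its matching edge, and so drops from degree $k$ to degree $k-1$, while every other vertex of the subgraph retains degree $k$. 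Hence these $m'$ large vertices form a $k$-critical set with respect to $G(n-r-m,p)$.

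Next I would invoke symmetry, exactly as in corollary \ref{clear}: conditioned on $B_j$, the set of $j$ large endpoints of the matching is a uniformly random $j$-subset of $\set{m+1}{n-r}$, independent of the edges inside the large set. Combining this with $\zeta/2\le\Pr[\mathcal{A}|B_j\cap X]$ and the structural observation above yields that, conditioned on $G(n-r-m,p)$ having no $k$-regular subgraph, a uniformly random set of $j$ vertices contains a $k$-critical subset of size $m'$ with probability at least $\zeta/2$. Finally, choosing a uniformly random $m'$-subset of these $j$ vertices, which is distributed exactly as a uniformly random $m'$-subset of all of $\set{m+1}{n-r}$, converts the \emph{existence} of a $k$-critical subset into a $k$-critical \emph{set} with probability at least $(\zeta/2)/\binom{j}{m'}$, a positive constant; since only positivity is needed for the eventual contradiction, this is the desired conclusion.

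The main obstacle is the bookkeeping in the structural step: seeing correctly that forcing each of the $m$ vertices of $\set{1}{m}$ down to degree $k-1$ consumes $k-1$ matching edges apiece, all to distinct large vertices, and that the restriction to $\set{m+1}{n-r}$ demotes \emph{precisely} these $m(k-1)$ endpoints from degree $k$ to degree $k-1$. This is exactly where the factor $k-1$ in $m'=m(k-1)$ originates, and it is the one genuine difference from corollary \ref{clear}, where the crossing edges belonged to a $k$-regular subgraph and therefore produced a critical set of size $m$ rather than $m(k-1)$.
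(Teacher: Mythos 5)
Your proposal is correct and follows essentially the same route as the paper: its one-line justification (``as before, in the proof of corollary \ref{clear}'') is precisely your argument --- restrict the witnessing subgraph to $\set{m+1}{n-r}$, note that under $B_j$ each vertex of $\set{1}{m}$ consumes $k-1$ crossing edges landing on distinct large vertices, so exactly $m'=m(k-1)$ large vertices drop from degree $k$ to $k-1$, and then use the symmetry of the matched endpoints together with $\zeta/2\le\Pr[\mathcal{A}|B_j\cap X]$. Your extra factor of $1/\binom{j}{m'}$ is in fact the honest constant (the paper's stated $\zeta/2$ silently absorbs it, exactly as corollary \ref{clear} passed from $\gamma$ to $\delta=\gamma/\binom{i}{m}$), and as you observe this is immaterial since only positivity of the constant is used in the ensuing contradiction with lemma \ref{addedges}.
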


Continuing in the same way as in the case where $m$ was even 
(note that $m'$ is even as $k$ is odd), we will get, again, a contradiction to lemma \ref{addedges}.
\end{proof}


\section{Sharpness of the property of containing a non empty $k$-core }
\label{core}

The proof of the sharpness of having a non empty $k$-core is very similar to the proof of the sharpness of containing a $k$-regular subgraph, therefore only a sketch of the proof will be given here.
Let $k \geq 3$ and $\mathcal{A}^*$ be the property of having a non empty $k$-core.
 Assume, for the sake of contradiction, that $\mathcal{A}^*$ has a coarse threshold.
Hence, using corollary \ref{coarse}, there exist $\alpha^*,\beta>0$, $p=p(n)$, an infinite set $S\subseteq\mathbb{N}$ and a fixed graph $H$ such that for every $n \in S$:
\begin{itemize}
\item
 $\alpha^* <\mu( p) < 1-\alpha^*$.
\item
The probability that a copy of $H$ appears as a subgraph in $G(n,p)$ is at least $\beta$.
\item
$\Pr[\mathcal{A}^*\vert H]>1-\eps$.
\end{itemize}

Note that a graph on $n$ vertices with more than $(k-1)n$ edges has a non trivial $k$-core (because the $k$-core of a graph could be found by erasing all vertices with degree lower than $k$ together with their edges, until no such vertices exist. In case the $k$-core is empty, all vertices will be removed in this process, yet no more than $(k-1)n$ edges can be removed). Hence $p=O(\frac{1}{n})$.
Therefore, using the fact that the probability of a copy of $H$ appearing in $G(n,p)$ is at least $\beta$, $H$ has an empty $k$-core. This is true because, otherwise, some fixed graph with minimal degree at least $k$ would have appeared in $G(n,p)$ with probability at least $\beta$. This is a contradiction to the fact the threshold function for such a graph is at least $n^{-2/k}=\omega(\frac{1}{n})$.

Let $r$ be the number of vertices of $H$.
Denote by $\cal{A}$  the property of the graphs whose union with a specific copy of $H$ on vertices $\set{1}{r}$ lies in $\mathcal{A}^*$.
It can be concluded, as in the proof of \ref{regular}, using lemmas \ref{Bi} and \ref{Ae}, that there are $m,i\in\mathbb{N}$ and $\gamma>0$  such that $\Pr[\mathcal{A}_m|\bar{\mathcal{A}_0}\cap B_i]\ >\gamma$ for every $n\in S_1\subseteq S$, where $S_1$ is an infinite set.

Consider the following definition.
\begin{define}
Let $k\in\mathbb{N}$, and let $G$ be a graph. A set $L$ of vertices of $G$ is called \emph{$k$-critical*} if there exists a subgraph of $G$ in which $L$'s vertices have degree at least $k-1$ and all other vertices have degree at least $k$.
\end{define}
Using this definition, the above leads to the conclusion (similarly to the proof of corollary \ref{clear}) that there exists $\delta>0$ such that for every $n\in S_1$ the following holds: When conditioning on $G(n-r,p)$ having an empty $k$-core, with probability at least $\delta$ a random set of $m$ vertices is $k$-critical*.

It follows easily that there exists  $\zeta>0$ such that for infinitely many $n\in S_1$, when conditioning on $G(n-r,p)$ having an empty $k$-core, a random set of size $2m$ is $k$-critical* with probability at least $\zeta$.

Using a variation of lemma \ref{edges}, it can be concluded that there exists $\zeta'>0$, such that for infinitely many $n\in S_1$, the following holds:
When conditioning on $G(n-r,p)$ having an empty $k$-core, the addition of $m$ edges to the graph will yield a non empty $k$-core with probability at least $\zeta'$.
With the same argument as in the proof of theorem \ref{regular}, this will lead to a contradiction to lemma \ref{addedges}.

\section{ Acknowledgements}
I would like to thank Ehud Friedgut for introducing me to the 
subject of sharp thresholds in random graphs in general, and to the problems treated in this paper in particular. 
I would also like to thank him for supporting and guiding me in the process of finding the proof and writing it.

\bibliography{k-regular}
\bibliographystyle{abbrv}

\end{document}